\documentclass[11pt, a4paper, reqno]{amsart}
\usepackage[utf8]{inputenc}
\usepackage{amsmath,amssymb}
\usepackage{graphicx}
\usepackage{indentfirst,csquotes}
\usepackage[colorlinks=true, linkcolor=blue, citecolor=blue, urlcolor=blue]{hyperref}
\usepackage{pdflscape} 
\usepackage{hhline} 
\usepackage{array} 
\usepackage{calrsfs}
\DeclareMathAlphabet{\pazocal}{OMS}{zplm}{m}{n}
\usepackage{latexsym,amsmath,amsfonts,amscd,amssymb, mathrsfs, amsthm}
\usepackage[english]{babel}
\usepackage{appendix}
\usepackage{hyperref}
\usepackage{mathtools}
\usepackage{booktabs}
\usepackage{natbib}

\advance\oddsidemargin by -0.5cm
\advance\evensidemargin by -0.5cm
\advance\topmargin by -1.5cm
\usepackage{booktabs}
\usepackage[scale=2]{ccicons}

\usepackage{epic}
\usepackage{autograph}
\usepackage{tikz}
\usetikzlibrary{arrows,decorations.markings}

\usepackage[normalem]{ulem}

\theoremstyle{definition}
\def\mxl{\left[ \begin{array}}  \def\mxr{\end{array} \right]}
\def\detl{\left| \begin{array}}  \def\detr{\end{array} \right|}
\def\bmx{\begin{bmatrix}} \def\emx{\end{bmatrix}}

\def\bc{\begin{center}}          \def\ec{\end{center}}
\def\ben{\begin{enumerate}}      \def\een{\end{enumerate}}
\def\beq{\begin{equation}}       \def\eequ{\end{equation}}
\def\beqn{\begin{eqnarray*}} \def\eeqn{\end{eqnarray*}} 
\def\eqs{\vspace{-10pt}\begin{equation}} \def\eqe{\vspace{-5pt}\end{equation}}
\def\bal{\begin{align}}           \def\eal{\end{align}}
\def\ben{\begin{enumerate}}       \def\een{\end{enumerate}}
\def\bit{\begin{itemize}}           \def\eit{\end{itemize}}
\def\btabb{\begin{tabbing}}         \def\etabb{\end{tabbing}}
\def\btab{\begin{tabular}}          \def\etab{\end{tabular}}
\def\barr{\begin{array}}           \def\earr{\end{array}}
\def\earrb{\end{array} \right\}}

\def\bf{\textbf}  

\def\0{\bf{0}}    

\font\Bbb=msbm10 
\def\A{\ifmmode{\mbox{\Bbb A}}\else{{\Bbb A}}\fi}
\def\B{\ifmmode{\mbox{\Bbb B}}\else{{\Bbb B}}\fi}
\def\R{\ifmmode{\mbox{\Bbb R}}\else{{\Bbb R}}\fi}
\def\C{\ifmmode{\mbox{\Bbb C}}\else{{\Bbb C}}\fi}

\def\G{\ifmmode{\mbox{\Bbb G}}\else{{\Bbb G}}\fi}
\def\N{\ifmmode{\mbox{\Bbb N}}\else{{\Bbb N}}\fi}
\def\Z{\ifmmode{\mbox{\Bbb Z}}\else{{\Bbb Z}}\fi}
\def\K{\ifmmode{\mbox{\Bbb K}}\else{{\Bbb K}}\fi}
\def\L{\ifmmode{\mbox{\Bbb L}}\else{{\Bbb L}}\fi}
\def\D{\ifmmode{\mbox{\Bbb D}}\else{{\Bbb D}}\fi}
\def\T{\ifmmode{\mbox{\Bbb T}}\else{{\Bbb T}}\fi}
\def\I{\ifmmode{\mbox{\Bbb I}}\else{{\Bbb I}}\fi}
\def\E{\ifmmode{\mbox{\Bbb E}}\else{{\Bbb E}}\fi}
\def\K{\ifmmode{\mbox{\Bbb K}}\else{{\Bbb K}}\fi}
\def\Q{\ifmmode{\mbox{\Bbb Q}}\else{{\Bbb Q}}\fi}
\def\P{\ifmmode{\mbox{\Bbb P}}\else{{\Bbb P}}\fi}

\def\cG{\ifmmode{\mathcal{G}}\else{{$\mathcal{G}$}}\fi} 
\def\cA{\ifmmode{\mathcal{A}}\else{{$\mathcal{A}$}}\fi}

\def\op{ \ifmmode{\oplus} \else{\leavevmode\hbox{$\oplus$}\fi } }
\def\ot{\ifmmode{\otimes}\else{$\!\!\otimes$}\fi}
\def\od{ \ifmmode{\odot} \else{\leavevmode\hbox{$\odot$}\fi } }
\def\adots{\mathinner{\raise 1pt\hbox{.}\mkern1mu\raise4 pt\hbox{.}\mkern2mu
  \mkern1mu\raise7 pt\vbox{\kern7 pt\hbox{.}}\mkern2mu}}

\def\bdefn{\begin{defn}} \def\edefn{\end{defn}}
\def\bex{\begin{ex}} \def\eex{\end{ex}}
\def\beg{\begin{eg}} \def\eeg{\end{eg}}
\def\blem{\begin{lem}} \def\elem{\end{lem}}
\def\bth{\begin{thm}} \def\eth{\end{thm}}
\def\bprop{\begin{prop}} \def\eprop{\end{prop}}
\def\bcor{\begin{cor}} \def\ecor{\end{cor}}

\newtheorem{thm}{Theorem}[section]  \newtheorem{lem}{Lemma}[section]
\newtheorem{cor}{Corollary}[section]  \newtheorem{prop}{Proposition}[section]
\newtheorem{eg}{Example}[section]    \newtheorem{ex}{Exercise}[section]
\newtheorem{defn}{Definition}[section]

\def\ball{\begin{align}}  \def\eall{\end{align}}

\def\bbin{\left( \begin{array}{c}} \def\ebin{\end{array} \right)}

\usepackage{pgfplots}
\pgfplotsset{compat=1.18}

\begin{document}
\title[Drazin Inverses and Walk Structure for Dutch Windmill Digraphs]{Drazin inverses and walk structure of oriented Dutch windmill graphs}
\author[C. Mendes Araújo]{C. Mendes Araújo}
\author[Faustino Maciala]{Faustino Maciala}
\author[Pedro Patrício]{Pedro Patrício}
\date{\today}
\address{C. Mendes Araújo, CMAT -- Centro de Matemática and Departamento de Matemática Universidade do Minho 4710-057 Braga Portugal}
\email{clmendes@math.uminho.pt}
\address{Faustino Maciala, CMAT -- Centro de Matemática Universidade do Minho 4710-057 Braga Portugal; Departamento de Ciências da Natureza e Ciências Exatas do Instituto Superior de Ciências da Educação de Cabinda -- ISCED-Cabinda, Angola.}
\email{fausmacialamath@hotmail.com}
\address{Pedro Patrício, CMAT -- Centro de Matemática and Departamento de Matemática Universidade do Minho 4710-057 Braga Portugal}
\email{pedro@math.uminho.pt}

\begin{abstract}
We investigate the Drazin invertibility of adjacency matrices associated with a class of oriented graphs known as oriented Dutch windmill graphs. By analyzing walks of prescribed lengths and exploiting the structure of the minimal polynomial, we obtain explicit expressions for the Drazin inverse and determine its index. The approach combines combinatorial enumeration with algebraic matrix analysis, offering a constructive characterization that generalizes known results for paths, cycles, and bipartite graphs. Beyond its intrinsic theoretical value, the framework provides insight into discrete models governed by cyclic feedback and may serve as a basis for symbolic computation of generalized inverses in structured networks.
\end{abstract} 

\date{\today}
\subjclass{15A09, 15A10, 05C20}
\keywords{Drazin inverse, Drazin index, minimal polynomial, directed graph}
\maketitle
\medskip

 \section{Introduction}

\hspace{0.5cm}A fundamental problem in discrete mathematics and spectral graph theory is to analyze the structure and enumeration of walks in digraphs, particularly those with cyclic or regular patterns. Directed graphs with repeated cycles arise in network dynamics, symbolic systems, flow models, and discrete-time feedback processes. Their adjacency matrices translate combinatorial connectivity into algebraic form, providing a natural bridge between graph theory and matrix analysis.

Within this framework, determining the (pseudo-)invertibility of matrices associated with graphs, and obtaining explicit expressions for such inverses, plays a crucial role. Classical results include the study of the inverse of the distance matrix of wheel graphs (\cite{Balaji}), 
 as well as the invertibility of adjacency matrices of trees (\cite{Godsil})
and bipartite graphs (\cite{Bapat,McLeman}).
 These investigations have been extended to generalized inverses, including the group and Drazin inverses, for various classes of graphs (\cite{Catral2,Catral-grp,McDonald,Sivakumar}).
  A general matrix-based approach to graph theory is presented in \cite{bookBapat}.

The study of generalized inverses of adjacency matrices, notably the Drazin inverse, offers insight into singular systems, absorbing Markov chains, and differential equations on networks. Although explicit Drazin inverses are known for paths, cycles, and bipartite graphs, deriving closed-form expressions for more complex digraphs remains a challenge.

Oriented Dutch windmill graphs, constructed by coalescing multiple directed cycles at a common vertex, combine recursive structure with combinatorial richness. Their simple recursive structure enables exact enumeration of walks, while still presenting nontrivial behavior from an algebraic perspective. Despite being a restricted class, they serve as a useful model to explore how local cyclic patterns influence pseudo-invertibility, and how generalized inverses capture walk-based properties. In this work, we compute the Drazin inverse of their adjacency matrices, establish their Drazin index, and characterize the structure of walks of fixed length. Our method is constructive and combinatorial: we characterize the minimal polynomial, compute the Drazin index, and identify the precise vertex pairs connected by unique walks of a fixed length. These results reveal how the combinatorial structure of the graph directly determines the support of the Drazin inverse, offering insight into the interplay between graph topology and matrix pseudo-invertibility.

Throughout this paper, we consider real matrices. The Drazin inverse of an $n \times n$ matrix $A$, denoted by $A^D$, is the unique solution to the equations $A^{k+1}X = A^k$, $XAX=A$, and $AX=XA$, for some non-negative integer $k$. The least such $k$ is called the Drazin index of $A$, written $\operatorname{ind}(A)$. When $\operatorname{ind}(A) \leq 1$, $A$ is said to have a group inverse, denoted $A^\#$. General theory on Drazin and group inverses can be found in \cite{benisraelgreville, campbell1979}.

Let $\psi_A$ and $\Delta_A$ denote, respectively, the minimal and characteristic polynomials of $A$. It is well known that:
$$
\psi_A \mid \Delta_A \mid \psi_A^n,
$$
which implies that $\psi_A$ and $\Delta_A$ share the same irreducible factors. If $0$ is an eigenvalue of $A$, its multiplicity as a root of $\psi_A$ is called the index of $A$, denoted $i(A)$, and it coincides with the Drazin index: $\operatorname{ind}(A) = i(A)$. When $\psi_A(\lambda) = \lambda^{i(A)} f(\lambda)$, with $\gcd(\lambda, f(\lambda)) = 1$, the Drazin inverse of $A$ may be expressed in terms of powers of $A$ and a polynomial in $A$, yielding practical computational methods.

 Let $G$ be a directed graph with vertex set $\{v_1, v_2, \ldots, v_n\}$. Its adjacency matrix $A = (a_{ij}) \in \mathbb{R}^{n \times n}$ is defined by
$$
a_{ij} = \begin{cases}
1 & \text{if there is an edge from } v_i \text{ to } v_j \text{ in } G, \\
0 & \text{otherwise}.
\end{cases}
$$

A walk in $G$ is a sequence $P = \langle v_1, v_2, \ldots, v_r \rangle$ of vertices such that each consecutive pair is connected by a directed edge. The length of $P$, denoted by $|P|$, is equal to $r-1$, which corresponds to the number of edges in the walk. If $Q = \langle w_1, \ldots, w_s \rangle$ is another walk with $w_1 = v_r$, we denote by $Q\circ P$ the walk in $G$ obtained by concatenating $P$ and $Q$, that is
$$Q\circ P=\langle v_1,v_2,\ldots,v_r,w_2,\ldots,w_s\rangle.$$ Observe that $|Q\circ P|=|P|+|Q|$.

In the sections that follow, we identify an annihilating polynomial of the adjacency matrices of oriented Dutch windmill graphs, determine their Drazin index, and deduce a formula for the Drazin inverse. Our method is constructive and combinatorial, rooted in the analysis of walks and their relation to matrix powers.

\section{Oriented Dutch windmill graphs}  

\hspace{0.5cm} The oriented Dutch windmill graph $D^m_n$ is the directed graph constructed by taking $m$ copies of the directed cycle $C_n$ (with $n \geq 3$) and identifying a single common vertex among all copies. Figure \ref{fig:dutch-windmill-d34} shows the digraph $D_3^4$, obtained by taking four copies of the directed cycle of length~$3$ sharing a common vertex.

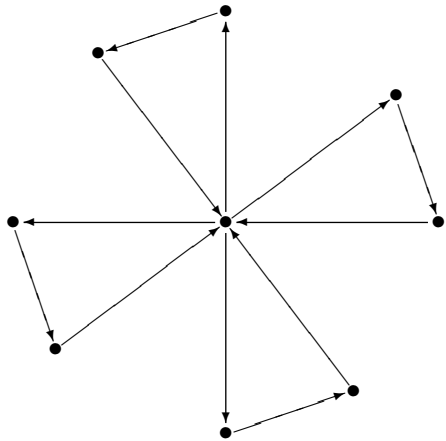
\begin{figure}[h]
\centering
\setlength{\unitlength}{0.28cm}
\begin{picture}(24,24)(-12,-12)

\letvertex[1] A1=(0,0)      \drawvertex(A1){$\bullet$}
\letvertex[1] A2=(8,6)      \drawvertex(A2){$\bullet$}
\letvertex[1] A3=(10,0)     \drawvertex(A3){$\bullet$}
\letvertex[1] A4=(0,10)     \drawvertex(A4){$\bullet$}
\letvertex[1] A5=(-6,8)     \drawvertex(A5){$\bullet$}
\letvertex[1] A6=(-10,0)    \drawvertex(A6){$\bullet$}
\letvertex[1] A7=(-8,-6)    \drawvertex(A7){$\bullet$}
\letvertex[1] A8=(0,-10)    \drawvertex(A8){$\bullet$}
\letvertex[1] A9=(6,-8)     \drawvertex(A9){$\bullet$}

\drawedge(A1,A2){}
\drawedge(A2,A3){}
\drawedge(A3,A1){}

\drawedge(A1,A4){}
\drawedge(A4,A5){}
\drawedge(A5,A1){}

\drawedge(A1,A6){}
\drawedge(A6,A7){}
\drawedge(A7,A1){}

\drawedge(A1,A8){}
\drawedge(A8,A9){}
\drawedge(A9,A1){}

\end{picture}
\caption{Oriented Dutch windmill graph $D_3^4$.}
\label{fig:dutch-windmill-d34}
\end{figure}

It is well known that the Drazin index is invariant under similarity transformations, since similar matrices share the same minimal polynomial. Moreover, the Drazin inverse itself transforms compatibly with similarity: if $B = UAU^{-1}$, then $B^D = UA^D U^{-1}$. 

In this context, when we assume that the graph $D_n^m$ has a given set of vertices and edges, we are implicitly considering a specific labeling of the vertices. However, different labelings of the same graph lead to permutation-similar adjacency matrices. Since similarity - including permutation similarity - preserves the minimal polynomial and the Drazin inverse, all adjacency matrices corresponding to isomorphic copies of $D_n^m$ yield the same spectral and Drazin properties. Therefore, our analysis remains valid regardless of the particular labeling chosen.

Without loss of generality, we shall henceforth consider the oriented Dutch windmill graph $D^m_n = (V, E)$ defined by
$$
V = \{1, 2, \ldots, m(n - 1) + 1\},
$$
and edge set $E$ consisting of directed edges $(a, b)$ determined as follows:
\begin{equation*}
\begin{split}
& a = 1\ \textrm{ and }\ b=(k-1)(n-1)+2, \textrm{ for }\ k\in\{1,\ldots, m\}\\
\textrm{or} & \\
 & a =(k-1)(n-1)+i\ \textrm{ and }\ b=a+1, \textrm{ for }\ k\in\{1,\ldots, m\} \textrm{ and }\ i\in\{2,\ldots, n-1\}\\
 \textrm{or} & \\
& a=(k-1)(n-1)+n\ \textrm{ and }\ b=1, \textrm{ for }\ k\in\{1,\ldots, m\}.
\end{split}
\end{equation*}

We begin by  introducing some notation and preliminary results concerning walks of specific lengths in the graph $D^m_n$.

For each $k \in \{1, \ldots, m\}$, let $C^k$ denote the directed cycle
$$\Big\langle 1,(k-1)(n-1)+2,\ldots,(k-1)(n-1)+(n-1), (k-1)(n-1)+n,1\Big\rangle,$$
which is a cycle of length $n$, beginning and ending at vertex 1. Observe that the cycles $C^1, C^2, \ldots, C^m$ represent the $m$ copies of the directed cycle $C_n$, all sharing the common vertex 1. Let $V^k$ denote the set of vertices comprising the cycle $C^k$, for each $k = 1, 2, \ldots, m$.

\begin{lem}\label{walksn-1}
 Let $k\in\{1,\ldots,m\}$. A walk of length $n-1$ from vertex $i$ to vertex $j$, with $i,j\in V^k$, exists in $D^m_n$ if and only if one of the following conditions holds:
$$i=1\ \textrm{ and }\ j=(k-1)(n-1)+n,$$
or
$$i=(k-1)(n-1)+2\ \textrm{ and }\ j=1,$$
or 
$$i=(k-1)(n-1)+\ell\ \textrm{ and }\ j=i-1 \textrm{ for some }\ \ell\in\{3,\ldots, n\}.$$
Moreover, in each of these cases, there exists exactly one such walk of length $n-1$ from $i$ to $j$ in $D^m_n$.
\end{lem}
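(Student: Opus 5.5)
The plan is to use the deterministic structure of the graph: every vertex other than $1$ has exactly one out-neighbour. Vertex $1$ has exactly $m$ out-neighbours, namely the vertices $(k'-1)(n-1)+2$ for $k'\in\{1,\ldots,m\}$. So a walk is determined by its starting vertex, together with a choice of branch $k'$ each time it leaves vertex $1$. Write the vertices of $C^k$ by their position along the cycle, with vertex $1$ at position $0$ and vertex $(k-1)(n-1)+\ell$ at position $\ell-1$, for $\ell\in\{2,\ldots,n\}$. A step from a vertex of $C^k$ at position $p\neq 0$ then goes to position $p+1 \pmod n$ within $C^k$. A step from position $0$ goes to position $1$ of some cycle $C^{k'}$.

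First I would treat $i\neq 1$, say $i=(k-1)(n-1)+\ell$ with $\ell\in\{2,\ldots,n\}$, so $i$ sits at position $\ell-1$. The walk is forced along $C^k$ until it reaches vertex $1$, which takes $n-\ell+1\le n-1$ steps. The remaining $\ell-2$ steps start at $1$ with a free choice of branch $k'$. They then proceed forced along $C^{k'}$, because $\ell-2<n$ means the walk does not return to $1$.

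If $\ell=2$, the remaining number of steps is $0$, so the unique walk ends at $j=1$. If $\ell\ge 3$, the endpoint is position $\ell-2$ of $C^{k'}$, i.e.\ vertex $(k'-1)(n-1)+\ell-1$. Requiring $j\in V^k$ and $j\neq 1$ forces $k'=k$, so $j=i-1$. Each branch choice gives a different walk, but only the choice $k'=k$ lands in $V^k$, hence the walk from $i$ to $i-1$ is unique.

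Next, for $i=1$, the first step chooses a branch $k'$. The next $n-2$ steps are forced along $C^{k'}$ and never revisit $1$, since reaching $1$ would take $n$ steps in total. So the endpoint is position $n-1$ of $C^{k'}$, i.e.\ $(k'-1)(n-1)+n$. This lies in $V^k$ exactly when $k'=k$, which gives $j=(k-1)(n-1)+n$ with a unique walk.

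Together these cases give both directions of the equivalence and the uniqueness claim. No step is genuinely hard. The point needing care is the bookkeeping of when the walk returns to vertex $1$: it must not pass through $1$ twice within $n-1$ steps. This holds because any closed walk through $1$ has length at least $n$. The other point to handle carefully is the branch choice at $1$, which is what rules out endpoints in $V^k$ reached via another cycle $C^{k'}$ with $k'\neq k$.
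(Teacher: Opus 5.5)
Your proof is correct, and it is organized differently from the paper's.

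The paper fixes both endpoints $i,j\in V^k$ and splits into cases: $i=j$; $i=1$; $j=1$; and $i,j\neq 1$ with either $\ell<\ell'$ or $\ell>\ell'$. In each case it writes down the shortest walk from $i$ to $j$. It then asserts that every other walk arises by inserting closed walks $C^{r_s}\circ\cdots\circ C^{r_1}$ at vertex $1$. So all walk lengths are the shortest length plus a positive multiple of $n$, and comparing with $n-1$ gives existence and uniqueness.

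You instead fix only the source and use the fact that every vertex other than $1$ has out-degree one. The walks of length $n-1$ from $i$ are then parametrized by at most one branch choice $k'$ made at vertex $1$. Vertex $1$ is visited at most once, since a return to it needs $n$ steps. You list all endpoints and intersect with $V^k$.

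What your route buys:
\begin{enumerate}
\item It makes explicit the determinism that the paper's ``any other walk can be written as \dots'' assertions rely on without stating it.
\item It absorbs the case $i=j$ automatically.
\item It gives uniqueness directly from the injectivity of $k'\mapsto(k'-1)(n-1)+\ell-1$.
\item Through the branches $k'\neq k$, it proves Lemma~\ref{walksn-1a} at the same time. In effect you compute the full row supports of $M^{n-1}$ in one pass.
\end{enumerate}

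The paper's pairwise approach instead yields the whole set of walk lengths between a fixed pair of vertices. That is the template it reuses for the later lemmas on walks of lengths $n$, $n-2$, $2n-2$ and $pn-1$. Your branch-counting view would handle those too, since each visit to $1$ that is followed by a step contributes a factor $m$. However, the bookkeeping has to be redone for each length.
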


\proof Let  $i,j\in V^k$. 

If $i=j$, the shortest walk from $i$ to $j$ in $D^m_n$ is the trivial walk $P=\langle i\rangle$, which has length $|P|=0$.  If $i=j=1$, any nontrivial walk from $i$ to $j$ in $D^m_n$ can be expressed as $$C^{r_s}\circ\ldots\circ C^{r_1},$$ where $s\in\mathbb N$ and $r_1,\ldots, r_s\in\{1,\ldots,m\}$. Each such walk has length $sn\geq n$. If $i=j\neq 1$, any nontrivial walk from $i$ to $j$ in $D^m_n$ must pass through vertex $1$ and can be described either as $$Q_{1i}\circ Q_{i1}$$ or $$Q_{1i}\circ C^{r_s}\circ\ldots\circ C^{r_1}\circ Q_{i1},$$ where $Q_{i1}$ and $Q_{1i}$ are the shortest paths within $C^k$ from $i$ to $1$ and from $1$ to $i$, respectively, $s\in\mathbb N$ and $r_1,\ldots, r_s\in\{1,\ldots,m\}$. These walks have lengths $n$ and $(s+1)n>n$, respectively.

Now, assume $i\neq j$. We distinguish three cases:

\noindent (I) $i=1$

Since $j\in V^k$, there exists $\ell\in\{2,...,n\}$ such that $j=(k-1)(n-1)+\ell$. The shortest walk from $1$ to $j$ in $D^m_n$ is 
$$P_{1j}=\Big\langle i=1,(k-1)(n-1)+2,\ldots,(k-1)(n-1)+(\ell-1), (k-1)(n-1)+\ell=j\Big\rangle,$$
of length $\ell -1$. Any other walk from $1$ to $j$ in $D^m_n$ can be written as $$P_{1j}\circ C^{r_s}\circ\ldots\circ C^{r_1},$$ where $s\in\mathbb N$ and $r_1,\ldots, r_s\in\{1,\ldots,m\}$, with total length $sn+|P|>n$. Hence, there exists a walk in $D^m_n$ of length $n-1$ from $i=1$ to $j$ if and only if $\ell=n$, i.e., $j=(k-1)(n-1)+n$. In that case, $P_{1j}$ is the unique walk in $D^m_n$ of length $n-1$ from $i=1$ to $j$.

\noindent (II) $j=1$

Since $i\in V^k$, there exists $\ell\in\{2,...,n\}$ such that $i=(k-1)(n-1)+\ell$. The shortest walk from $i$ to $1$ in $D^m_n$ is 
$$P_{i1}=\Big\langle i=(k-1)(n-1)+\ell,(k-1)(n-1)+(\ell+1), \ldots,(k-1)(n-1)+n,1=j\Big\rangle,$$
with length $n-\ell+1$. Any other walk from $i$ to $1$ in $D^m_n$ can be expressed as $$C^{r_s}\circ\ldots\circ C^{r_1}\circ P_{i1},$$ where $s\in\mathbb N$ and $r_1,\ldots, r_s\in\{1,\ldots,m\}$, and has length $sn+(n-\ell+1)>n$. Therefore, there exists a walk of length $n-1$ from $i$ to $j=1$ if and only if $\ell=2$, i.e., $i=(k-1)(n-1)+2$. In this case, it is unique.

\noindent (III) $i\neq 1$ and $j\neq 1$.

Let $$i=(k-1)(n-1)+\ell,\quad j=(k-1)(n-1)+\ell^\prime,$$ for some $\ell,\ell^\prime\in\{2,\ldots, n\}$, with $\ell\neq\ell^\prime$. 

If $\ell<\ell^\prime$, the shortest walk from $i$ to $j$ in $D^m_n$ is 
\begin{align*}P_{ij}=&\Big\langle i=(k-1)(n-1)+\ell,(k-1)(n-1)+(\ell+1), \ldots,\\&(k-1)(n-1)+(\ell^\prime-1),(k-1)(n-1)+\ell^\prime=j\Big\rangle,\end{align*}
with length $\ell^\prime-\ell<n-1$. Any other walk from $i$ to $j$ in $D^m_n$ can be described as 
$$P_{ij}\circ Q_{1i}\circ Q_{i1}$$ or as $$P_{ij}\circ Q_{1i}\circ C^{r_s}\circ\ldots\circ C^{r_1}\circ Q_{i1},$$ where $Q_{i1}$ and $Q_{1i}$ are the shortest paths in $C^k$ from $i$ to $1$ and from $1$ to $i$, respectively, $s\in\mathbb N$ and $r_1,\ldots, r_s\in\{1,\ldots,m\}$. These walks have lengths $n+|P_{ij}|>n$ and $(s+1)n+|P_{ij}|>n$, respectively. Therefore, if $\ell<\ell^\prime$, there is no walk of length $n-1$ from $i$ to $j$.

If $\ell>\ell^\prime$, the shortest walk from $i$ to $j$ in $D^m_n$ passes through vertex $1$ and is given by
\begin{align*}
P_{ij}=&\Big\langle i=(k-1)(n-1)+\ell,(k-1)(n-1)+(\ell+1),\ldots, (k-1)(n-1)+n,1,\\
&(k-1)(n-1)+2,\ldots, (k-1)(n-1)+\ell^\prime=j \Big\rangle,\end{align*}
with total length $(n-\ell+1)+(\ell^\prime-1)=n-\ell+\ell^\prime$. Any other walk from $i$ to $j$ in $D^m_n$ can be described as 
$$Q_{1j}\circ C^{r_s}\circ\ldots\circ C^{r_1}\circ Q_{i1},$$ where $Q_{i1}$ and $Q_{1j}$ are the shortest paths in $C^k$ from $i$ to $1$ and from $1$ to $j$, respectively, $s\in\mathbb N$ and $r_1,\ldots, r_s\in\{1,\ldots,m\}$. The length of each of such walks from $i$ to $j$ is $sn+|P_{ij}|>n$. Therefore, if $\ell>\ell^\prime$, there exists a walk from $i$ to $j$ in $D^m_n$ of length $n-1$ if and only if $\ell^\prime=\ell-1$. In this case, $P_{ij}$ is the unique walk in $D^m_n$ of length $n-1$ from $i$ to $j$. \endproof

\textit{The existence and uniqueness of such walks highlight the strict orientation and labeling of each cycle. The previous lemma lays the combinatorial foundation for identifying the exact support of the Drazin inverse.}

\begin{lem}  \label{walksn-1a} Let $k,r\in\{1,\ldots,m\}$ with $k\neq r$. There exists a walk of length $n-1$ in $D^m_n$ from a vertex $i\in V^k\setminus\{1\}$ to a vertex $j\in V^r\setminus\{1\}$ if and only if there exists $\ell\in\{3,\ldots, n\}$ such that 
$$i=(k-1)(n-1)+\ell\ \textrm{ and }\ j=(r-1)(n-1)+(\ell-1).$$ 
Moreover, in such cases, this walk is unique.
\end{lem}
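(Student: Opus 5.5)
The plan is to follow the structure of the proof of Lemma \ref{walksn-1}: first describe every walk between vertices of two different cycles, then compare lengths with $n-1$.

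Write $i=(k-1)(n-1)+\ell$ and $j=(r-1)(n-1)+\ell'$ with $\ell,\ell'\in\{2,\ldots,n\}$.

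\emph{Structure of the walks.} The only edges leaving $V^k\setminus\{1\}$ go either to the next vertex of $C^k$ or to vertex $1$. The only edges entering $V^r\setminus\{1\}$ come from the previous vertex of $C^r$ or from vertex $1$. Since $k\neq r$, any walk from $i$ to $j$ must therefore pass through vertex $1$. Let $Q_{i1}$ be the unique shortest path in $C^k$ from $i$ to $1$, of length $n-\ell+1$. Let $Q_{1j}$ be the unique shortest path in $C^r$ from $1$ to $j$, of length $\ell'-1$. Cut the walk at its first and last visits to $1$, as in case (III) of Lemma \ref{walksn-1}. Every walk from $i$ to $j$ then has one of the forms
$$Q_{1j}\circ Q_{i1}\quad\textrm{or}\quad Q_{1j}\circ C^{r_s}\circ\cdots\circ C^{r_1}\circ Q_{i1},$$
with $s\in\mathbb N$ and $r_1,\ldots,r_s\in\{1,\ldots,m\}$. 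Uniqueness of the first piece holds because, from a non-hub vertex of $C^k$, the walk is forced along $C^k$ until it reaches $1$. The same holds in reverse for the last piece.

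\emph{Length comparison.} The first form has length $(n-\ell+1)+(\ell'-1)=n-\ell+\ell'$. The second form has length $sn+n-\ell+\ell'$. Since $\ell\le n$ and $\ell'\ge 2$, we have $n-\ell+\ell'\ge 2$, so the second form always has length at least $n+2>n-1$. A walk of length $n-1$ therefore exists if and only if $n-\ell+\ell'=n-1$, that is, $\ell'=\ell-1$. This forces $\ell\in\{3,\ldots,n\}$, because $\ell'\ge 2$. In that case the walk is exactly $Q_{1j}\circ Q_{i1}$, which is unique because both pieces are uniquely determined. Conversely, if $\ell'=\ell-1$ with $\ell\in\{3,\ldots,n\}$, this concatenation is a walk of length $n-1$, which proves existence.

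\emph{Expected obstacle.} The only delicate point is justifying the decomposition of every walk rigorously. The decomposition must account for intermediate returns to $1$: these are closed walks at $1$, hence concatenations of full cycles $C^{r_t}$. It must also establish that the segments before the first and after the last visit to $1$ are forced. I would argue this from the fact that each non-hub vertex has out-degree and in-degree $1$. That fact makes the prefix and suffix deterministic, and the rest is the arithmetic above.
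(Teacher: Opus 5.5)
Your proposal is correct and follows essentially the same argument as the paper. Every walk from $i$ to $j$ is either $Q_{1j}\circ Q_{i1}$ or $Q_{1j}\circ C^{r_s}\circ\cdots\circ C^{r_1}\circ Q_{i1}$, and comparing the lengths $n-\ell+\ell'$ and $sn+n-\ell+\ell'$ with $n-1$ forces $\ell'=\ell-1$, $\ell\ge 3$, and uniqueness; your in-degree/out-degree justification of this decomposition is more explicit than the paper's, which simply asserts it.
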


\proof
Since $i\in V^k\setminus\{1\}$ and $j\in V^r\setminus\{1\}$, there exist $\ell,\ell^\prime\in\{2,\ldots, n\}$
such that 
$$i=(k-1)(n-1)+\ell\ \textrm{ and }\ j=(r-1)(n-1)+\ell^\prime.$$ 
Consider the walk
\begin{align*}P_{ij}=&\Big\langle i=(k-1)(n-1)+\ell,(k-1)(n-1)+(\ell+1),\ldots, (k-1)(n-1)+n,1,\\
&(r-1)(n-1)+2,\ldots, (r-1)(n-1)+\ell^\prime =j\Big\rangle.\end{align*}
This walk traverses from $i$ to $1$ within $C^k$, then from $1$ to $j$ within $C^r$. It is the shortest walk from $i$ to $j$ in $D^m_n$, with length $(n-\ell+1)+(\ell^\prime-1)=n-\ell+\ell^\prime$. For this to equal $n-1$, we must have $\ell\geq 3$ and $\ell^\prime=\ell-1$. Any other walk from $i$ to $j$ in $D^m_n$ can be expressed as
$$Q_{1j}\circ C^{r_s}\circ\ldots\circ C^{r_1}\circ Q_{i1},$$ where $Q_{i1}$ is the shortest path in $C^k$ from $i$ to $1$, $Q_{1j}$ is the shortest path in $C^r$ from $1$ to $j$, $s\in\mathbb N$ and $r_1,\ldots, r_s\in\{1,\ldots,m\}$. The length of each of such walks from $i$ to $j$ is $sn+|P_{ij}|>n$. Hence, there is a walk in $D^m_n$ of length $n-1$ from $i$ to $j$   if and only if $\ell\geq 3$ and $\ell^\prime=\ell-1$, and in that case, that walk, $P_{ij}$, is unique. \endproof

\begin{lem} For any pair of vertices $i, j \in V$, if $P$ is a walk of length $2n - 1$ in $D^m_n$ from $i$ to $j$, then exactly one of the following cases occurs:
\begin{enumerate}
\item[(a)] $i=1$, and there exists $k\in\{1,\ldots, m\}$ such that $j=(k-1)(n-1)+n$. In this case, the walk $P$ can be written as
$$P=P_{1j}\circ C^s,$$
where $s\in\{1,\ldots, m\}$ and $P_{1j}$ is the unique walk in $D^m_n$ of length $n-1$ from $i$ to $j$;
\item[(b)] $j=1$, and there exists $k\in\{1,\ldots, m\}$ such that $i=(k-1)(n-1)+2$. In this case, the walk $P$ can be expressed as
$$P=C^s\circ P_{i1},$$
where $s\in\{1,\ldots, m\}$ and $P_{i1}$ is the unique walk in $D^m_n$ of length $n-1$ from $i$ to $j$;
\item[(c)] There exist $k,r\in\{1,\ldots, m\}$ and $\ell\in\{3,\ldots, n\}$ such that $i=(k-1)(n-1)+\ell$, $j=(r-1)(n-1)+(\ell-1)$ and the walk $P$ can be written as
$$P=Q_{1j}\circ C^s\circ Q_{i1},$$
where $s\in\{1,\ldots, m\}$, and the concatenation $Q_{1j}\circ Q_{i1}$ forms the unique walk of length $n-1$ from $i$ to $j$  in $D^m_n$.
\end{enumerate}
\end{lem}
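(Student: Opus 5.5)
Every walk of positive length in $D^m_n$ is forced: from a vertex $v\neq 1$ there is exactly one outgoing edge, and the only branching occurs at vertex $1$, where one chooses which cycle $C^k$ to enter. The plan is to decompose $P$ according to its visits to vertex $1$ and then use the length constraint $2n-1$. Write $P=\langle v_0=i,v_1,\ldots,v_{2n-1}=j\rangle$, and let $t_1<\dots<t_q$ be the indices with $v_t=1$. Between consecutive visits to $1$ the walk is a full cycle $C^s$ of length exactly $n$, since once it leaves $1$ along some $C^s$ it must traverse that cycle completely before returning. The initial segment from $i$ to its first visit to $1$ is the shortest path $Q_{i1}$ inside the cycle containing $i$; its length is $0$ if $i=1$, and $n-\ell+1\in\{1,\ldots,n-1\}$ if $i=(k-1)(n-1)+\ell$ with $\ell\geq 2$. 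The final segment from the last visit to $1$ to $j$ is $Q_{1j}$, of length $0$ if $j=1$, and $\ell'-1\in\{1,\ldots,n-1\}$ if $j=(r-1)(n-1)+\ell'$.

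First we must rule out $q=0$. A walk avoiding $1$ stays inside a single $C^k\setminus\{1\}$, which is a directed path with $n-1$ vertices, so it has length at most $n-2<2n-1$. Hence $q\ge1$, and
$$2n-1=|Q_{i1}|+(q-1)n+|Q_{1j}|,$$
where each of $|Q_{i1}|$ and $|Q_{1j}|$ lies in $[0,n-1]$. Reducing modulo $n$ gives $|Q_{i1}|+|Q_{1j}|\equiv n-1 \pmod n$. The sum of the two lengths lies in $[0,2n-2]$, so it must equal $n-1$, and therefore $q=2$: there is exactly one full cycle $C^s$ between the two outer segments.

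The three cases of the statement come from how the value $n-1$ is split between the two segments.
\begin{itemize}
\item If $|Q_{i1}|=0$, then $i=1$ and $|Q_{1j}|=n-1$, so $\ell'=n$. This is case (a), with $P=Q_{1j}\circ C^s$ and $Q_{1j}=P_{1j}$ as in Lemma~\ref{walksn-1}.
\item If $|Q_{1j}|=0$, the symmetric argument gives $j=1$ and $\ell=2$. This is case (b).
\item If both segments have positive length, then $i,j\neq1$ and $(n-\ell+1)+(\ell'-1)=n-1$, that is, $\ell'=\ell-1$ with $\ell\in\{3,\ldots,n\}$. In this case $Q_{1j}\circ Q_{i1}$ is the unique walk of length $n-1$ from $i$ to $j$ given by Lemma~\ref{walksn-1} (when $k=r$) or Lemma~\ref{walksn-1a} (when $k\neq r$). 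This is case (c).
\end{itemize}

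The three cases are mutually exclusive. Case (a) requires $i=1$ and case (b) requires $j=1$; both cannot hold together, since $i=j=1$ would force a split $0+0\neq n-1$. Case (c) has $i,j\neq 1$.

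The main obstacle is making the decomposition rigorous, in particular the claim that every return to $1$ takes exactly $n$ steps and the initial and final segments are the forced shortest paths. This follows directly from the out-degree-one property of all vertices other than $1$, which is the same structural fact already used implicitly in the proofs of Lemmas~\ref{walksn-1} and \ref{walksn-1a}. I would state it explicitly once and then carry out the length count above.
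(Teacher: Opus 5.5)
Your proof is correct. It arrives at the same decomposition as the paper, one full cycle $C^s$ plus a walk of length $n-1$ from $i$ to $j$, but by a more careful route.

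The paper only asserts that a walk of length $2n-1$ ``must include exactly one of the cycles'' and that the remaining $n-1$ edges form a walk from $i$ to $j$. It then reads off (a)--(c) from the classification of length-$(n-1)$ walks in Lemmas~\ref{walksn-1} and~\ref{walksn-1a}.

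You prove that assertion instead:
\begin{itemize}
\item Every vertex other than $1$ has out-degree one, so the initial and final segments are forced and consecutive visits to $1$ are exactly $n$ steps apart.
\item The bound $n-2$ rules out walks that avoid $1$.
\item The congruence $|Q_{i1}|+|Q_{1j}|\equiv n-1 \pmod n$, with both lengths in $[0,n-1]$, forces exactly one cycle.
\end{itemize}
The three cases then come directly from how $n-1$ splits between $|Q_{i1}|$ and $|Q_{1j}|$. You need Lemmas~\ref{walksn-1} and~\ref{walksn-1a} only to identify $Q_{1j}\circ Q_{i1}$ as the unique walk of length $n-1$.

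The paper's version is shorter because it reuses the earlier lemmas, but the step it leaves implicit is exactly the one you make rigorous. Your argument also carries over verbatim to length $pn-1$: the same congruence gives exactly $p$ visits to $1$, hence $p-1$ freely chosen cycles. That yields the subsequent structural lemma and the count $m^{p-1}$ of Lemma~\ref{walkspn-1} with no extra work.
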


\proof Any walk $P$ in $D^m_n$ of length $2n - 1$ must include exactly one of the cycles $C^1,\ldots,C^m$, and the remaining $n-1$ edges must form a walk from $i$ to $j$. Thus, $P$ decomposes as a cycle $C^s$ of length $n$, together with a walk of length $n - 1$ connecting $i$ to $j$.

All possible walks of length $n-1$ in $D^m_n$ are classified in Lemmas \ref{walksn-1} and \ref{walksn-1a}. Each of the three cases (a)-(c) corresponds to one of the possible types of such walks, and the inclusion of the cycle accounts for the additional $n$ edges, bringing the total length to $2n-1$.

Therefore, each walk of length $2n-1$ from $i$ to $j$ must fall into exactly one of the described forms. \endproof

The following result is an immediate consequence of the preceding lemmas and of the fact that the digraph $D_n^m$ contains exactly $m$ copies of the directed cycle $C_n$.

\begin{lem} \label{walks2n-1}
Let $i,j\in V$. There exists a walk of length $2n-1$ from vertex $i$ to vertex $j$ in $D^m_n$ if and only if there exists a walk of length $n-1$ from $i$ to $j$ in $D^m_n$.
In such cases, there are exactly $m$ distinct walks of length $2n-1$ from $i$ to $j$ in $D^m_n$.
\end{lem}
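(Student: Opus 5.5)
We will establish a bijection, for each fixed pair $(i,j)$, between the walks of length $2n-1$ from $i$ to $j$ and the set $\{1,\ldots,m\}$. Existence of walks of length $n-1$ then follows immediately. The bijection sends $s$ to the walk obtained by inserting the cycle $C^s$ into the unique walk of length $n-1$ from $i$ to $j$.

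\emph{Walks of length $n-1$.} First we record that Lemmas \ref{walksn-1} and \ref{walksn-1a} list all pairs $(i,j)$ admitting a walk of length $n-1$, and that for each such pair the walk is unique. The only case those lemmas do not literally cover is $i=1=j$, and the case $i\in V^k\setminus\{1\}$, $j=1$ with the cycle index irrelevant. Both are handled by the remark in the proof of Lemma \ref{walksn-1} that every closed walk has length a multiple of $n$. In every case the unique walk passes through vertex $1$ exactly once. It is either of the form $P_{1j}$ (when $i=1$), $P_{i1}$ (when $j=1$), or $Q_{1j}\circ Q_{i1}$ (when $i,j\neq 1$).

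\emph{Only if.} This direction comes from the preceding (unnumbered) lemma. Any walk of length $2n-1$ from $i$ to $j$ falls into one of the cases (a)--(c), and in each case the pair $(i,j)$ is one of those admitting a walk of length $n-1$.

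\emph{If, and the count.} Conversely, suppose $W$ is the unique walk of length $n-1$ from $i$ to $j$. Split $W$ at its unique visit to vertex $1$ as $W=W_2\circ W_1$, with $W_1$ ending at $1$ and $W_2$ starting at $1$; one of the two may be trivial. For each $s\in\{1,\ldots,m\}$ define $W_2\circ C^s\circ W_1$. This is a walk of length $2n-1$ from $i$ to $j$. The $m$ walks obtained this way are pairwise distinct, because the vertex following the inserted occurrence of $1$ is $(s-1)(n-1)+2$, which determines $s$. By the preceding lemma every walk of length $2n-1$ has exactly this form, with the splitting determined by $(i,j)$. Since $W$ is unique, the map $s\mapsto W_2\circ C^s\circ W_1$ is onto the set of such walks, so there are exactly $m$ of them.

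\emph{Main obstacle.} The delicate point is rigor in the decomposition claim of the preceding lemma: that a walk of length $2n-1$ contains exactly one full cycle $C^s$, and that removing it leaves the unique walk of length $n-1$. We justify this by examining the visits of the walk to vertex $1$. Between two consecutive visits the walk traverses one full cycle, of length exactly $n$. The segment before the first visit has length at most $n-1$, and so does the segment after the last visit. A walk of length $2n-1$ therefore visits $1$ either once or twice. A single visit would force the length to be at most $2n-2$, so there are exactly two visits, and hence exactly one inserted cycle. The outer segments then concatenate to a walk of length $n-1$, which is unique by Lemmas \ref{walksn-1} and \ref{walksn-1a}.
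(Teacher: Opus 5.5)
Your proposal is correct and follows essentially the same route as the paper, which obtains this lemma directly from the preceding decomposition lemma (every walk of length $2n-1$ is the unique walk of length $n-1$ with one cycle $C^s$ inserted at vertex $1$) together with the $m$ choices of $s$. Your count of visits to vertex $1$ and your check that the $m$ inserted walks are pairwise distinct make explicit what the paper leaves implicit; one small correction is that the cases $i=j=1$ and $j=1$ are in fact already covered by Lemma~\ref{walksn-1}, since $1\in V^k$ for every $k$, but this does not affect your argument.
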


These last two lemmas admit a straightforward generalization, as stated below.

\begin{lem} Let $i, j \in V$ and $p \in \mathbb{N}$ with $p \geq 2$. If $P$ is a walk in $D^m_n$ of length $pn - 1$ from vertex $i$ to vertex $j$, then exactly one of the following holds:
\begin{enumerate}
\item[(a)] $i=1$, and there exists $k\in\{1,\ldots, m\}$ such that $j=(k-1)(n-1)+n$. In this case, $P$ can be expressed as
$$P=P_{1j}\circ C^{r_{p-1}}\circ\ldots\circ C^{r_1},$$
where $r_1,\ldots, r_{p-1}\in\{1,\ldots, m\}$, and $P_{1j}$ is the unique walk of length $n-1$ from $i$ to $j$ in $D^m_n$;
\item[(b)] $j=1$, and there exists $k\in\{1,\ldots, m\}$ such that $i=(k-1)(n-1)+2$. Then, $P$ can be written as
$$P=C^{r_{p-1}}\circ\ldots\circ C^{r_1}\circ P_{i1},$$
where $r_1,\ldots, r_{p-1}\in\{1,\ldots, m\}$ and $P_{i1}$ is the unique walk of length $n-1$ from $i$ to $j$ in $D^m_n$;
\item[(c)] There exist $k,r\in\{1,\ldots, m\}$ and $\ell\in\{3,\ldots, n\}$ such that $i=(k-1)(n-1)+\ell$, $j=(r-1)(n-1)+(\ell-1)$ and $P$ can be expressed as
$$P=Q_{1j}\circ C^{r_{p-1}}\circ\ldots\circ C^{r_1}\circ Q_{i1},$$
where $r_1,\ldots, r_{p-1}\in\{1,\ldots, m\}$, and $Q_{1j}\circ Q_{i1}=P_{ij}$, the unique walk of length $n-1$ from $i$ to $j$ in $D^m_n$ .
\end{enumerate}
\end{lem}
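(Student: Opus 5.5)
The proof generalizes the argument of the preceding lemma for $2n-1$, replacing the single cycle by $p-1$ cycles. The key tool is a decomposition of any walk according to its visits to vertex $1$. Every vertex other than $1$ has out-degree one, so a walk leaving a vertex $i\neq 1$ is forced along $C^k$ until it reaches $1$. Likewise every vertex other than $1$ has in-degree one, so the final segment of a walk ending at $j\neq 1$ is forced back to the last visit of $1$.

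So let $P$ be a walk of length $pn-1$ from $i$ to $j$, and decompose it at its visits to vertex $1$. Two situations arise.

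\emph{The walk never visits $1$.} Then it stays inside a single path segment of some $C^k$ minus vertex $1$. Such a segment has at most $n-2$ edges, which is less than $pn-1$, so this case is impossible.

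\emph{The walk visits $1$.} Let $t_1$ be the first visit and $t_2$ the last. The initial segment from $i$ to the first visit is $Q_{i1}$, the shortest path in its cycle (or trivial if $i=1$). The final segment from the last visit to $j$ is $Q_{1j}$ (or trivial if $j=1$). The middle segment is a closed walk at $1$, hence a concatenation $C^{r_s}\circ\cdots\circ C^{r_1}$ of length $sn$. This gives
\[
pn-1 = |Q_{i1}| + sn + |Q_{1j}|,
\]
with $0\le |Q_{i1}|,|Q_{1j}|\le n-1$. Hence $|Q_{i1}|+|Q_{1j}|\equiv n-1 \pmod n$ and lies in $[0,2n-2]$, which forces $|Q_{i1}|+|Q_{1j}|=n-1$ and $s=p-1$.

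The concatenation $Q_{1j}\circ Q_{i1}$ is therefore a walk of length $n-1$ from $i$ to $j$. By Lemmas~\ref{walksn-1} and~\ref{walksn-1a}, it is one of the unique walks listed there. Sorting by which endpoint equals $1$ gives the three cases:
\begin{itemize}
\item[(a)] $i=1$: then $Q_{i1}$ is trivial, $j=(k-1)(n-1)+n$, and $Q_{1j}=P_{1j}$.
\item[(b)] $j=1$: then $Q_{1j}$ is trivial, $i=(k-1)(n-1)+2$, and $Q_{i1}=P_{i1}$.
\item[(c)] $i,j\neq 1$: then $i=(k-1)(n-1)+\ell$ and $j=(r-1)(n-1)+(\ell-1)$ with $\ell\ge 3$, the cases $k=r$ and $k\ne r$ coming from the two lemmas respectively.
\end{itemize}

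The cases are mutually exclusive. The pair $i=j=1$ cannot occur, since the length-$n-1$ analysis rules it out. In case (c) neither endpoint is $1$, which separates it from (a) and (b). Cases (a) and (b) cannot both hold, since that would require $i=j=1$.

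The main step needing care is the structural claim that the segments before the first and after the last visit to $1$ are exactly the forced shortest paths $Q_{i1}$ and $Q_{1j}$. This follows from in- and out-degree one at every vertex other than $1$, read directly off the edge set $E$. Once that is in place, the congruence argument above finishes the proof.
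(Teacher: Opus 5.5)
Your proof is correct and follows the same plan as the paper: split off $p-1$ hub cycles, then classify the leftover length-$(n-1)$ walk $Q_{1j}\circ Q_{i1}$ using Lemmas~\ref{walksn-1} and~\ref{walksn-1a}. The paper only asserts that this decomposition must occur; your first/last-visit-to-$1$ splitting, the in/out-degree-one forcing, and the count $|Q_{i1}|+|Q_{1j}|\equiv n-1 \pmod n$ with $|Q_{i1}|+|Q_{1j}|\le 2n-2$ actually prove it, and they correctly place the cycles at the hub inside $P_{ij}$ in case (c), rather than literally ``followed by'' a walk of length $n-1$.
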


\proof To construct a walk $P$ of length $pn-1$ from $i$ to $j$, one must concatenate $p-1$ cycles of length $n$, followed by a walk of length $n - 1$ from $i$ to $j$. The existence and structure of such walks of length $n-1$ are given by Lemmas \ref{walksn-1} and \ref{walksn-1a}, and all combinations involving $p-1$ cycles yield valid walks. The three cases correspond to all possible configurations that admit such a walk.\endproof

\begin{lem} \label{walkspn-1}
Let $i,j\in V$ and let $p\in\mathbb{N}$ with $p\geq 2$. There exists a walk of length $pn-1$ from vertex $i$ to vertex $j$ in $D^m_n$ if and only if there exists a walk of length $n-1$ from $i$ to $j$ in $D^m_n$.
In such cases, the total number of walks of length $pn-1$ from $i$ to $j$ in $D^m_n$ is exactly $m^{p - 1}$.
\end{lem}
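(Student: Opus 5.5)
We derive Lemma \ref{walkspn-1} from the preceding lemma (the structural description of walks of length $pn-1$) together with Lemmas \ref{walksn-1} and \ref{walksn-1a}, in the same way that Lemma \ref{walks2n-1} follows from the case $p=2$. There are two parts: the equivalence of existence, and the count $m^{p-1}$.

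For the forward implication, let $P$ be a walk of length $pn-1$ from $i$ to $j$. By the preceding lemma, $P$ falls into exactly one of the cases (a), (b) or (c). In each case the pair $(i,j)$ is one of the pairs listed in Lemma \ref{walksn-1} or Lemma \ref{walksn-1a}, so a walk of length $n-1$ from $i$ to $j$ exists. Case (c) with $k=r$ is covered by Lemma \ref{walksn-1}, and with $k\neq r$ by Lemma \ref{walksn-1a}.

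For the converse, suppose a walk of length $n-1$ from $i$ to $j$ exists. By Lemmas \ref{walksn-1} and \ref{walksn-1a} it is unique, and it passes through vertex $1$. Write it as $Q_{1j}\circ Q_{i1}$, where $Q_{i1}$ is the shortest path from $i$ to $1$ and $Q_{1j}$ the shortest path from $1$ to $j$; either part may be trivial, as in cases (a) and (b). For any choice $r_1,\ldots,r_{p-1}\in\{1,\ldots,m\}$, the concatenation
$$Q_{1j}\circ C^{r_{p-1}}\circ\cdots\circ C^{r_1}\circ Q_{i1}$$
is a walk, because every $C^{r}$ starts and ends at $1$. Its length is $(n-1)+(p-1)n=pn-1$. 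This gives existence.

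For the count, the preceding lemma shows that every walk of length $pn-1$ from $i$ to $j$ has the displayed form. The outer pieces $Q_{i1},Q_{1j}$ are forced by the uniqueness in Lemmas \ref{walksn-1} and \ref{walksn-1a}. The walks are therefore parametrised by the sequences $(r_1,\ldots,r_{p-1})\in\{1,\ldots,m\}^{p-1}$, and it remains to show this parametrisation is injective.

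Injectivity is the main point requiring care. Distinct sequences give distinct vertex sequences: after the fixed prefix $Q_{i1}$, the walk sits at vertex $1$, and the next vertex $(r_1-1)(n-1)+2$ determines $r_1$. Each $C^{r_t}$ then has fixed length $n$ and returns to $1$, so the vertex reached after a further $n$ steps is again $1$, and the following vertex determines $r_{t+1}$. Reading off the $r_t$ successively recovers the whole sequence. Hence the number of walks is exactly $m^{p-1}$.

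The delicate point is the claim, taken from the preceding lemma, that no other decomposition exists. Implicitly this means any closed excursion through $1$ has length a multiple of $n$, so the extra $(p-1)n$ edges must consist of whole cycles inserted at vertex $1$. We accept this from the preceding lemma, noting that it follows because every closed walk at $1$ is a concatenation of the cycles $C^r$.
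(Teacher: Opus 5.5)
Your proof is correct and follows the paper's route: you use the preceding structural lemma to write every walk of length $pn-1$ as $Q_{1j}\circ C^{r_{p-1}}\circ\cdots\circ C^{r_1}\circ Q_{i1}$ and then count the $m^{p-1}$ choices of cycles. You also make explicit two steps the paper leaves implicit in the phrase ``can be independently selected'': the construction giving the converse, and the injectivity of the map $(r_1,\ldots,r_{p-1})\mapsto$ walk.
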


\proof By the previous lemma, a walk of length $pn-1$ from $i$ to $j$ exists if and only if there exists a walk of length $n - 1$ from $i$ to $j$. Each such walk of length $pn-1$ must include exactly $p-1$ cycles $C^{r_1}, \ldots, C^{r_{p - 1}}$, which can be independently selected from the $m$ available cycles in $D^m_n$. Since repetitions are allowed, there are $m^{p - 1}$ such combinations.\endproof

\textit{This result quantifies how the number of long walks in the digraph scales with the number of available cycles. It also illustrates the regularity of the structure: the repetition of cycles contributes multiplicatively to the count of admissible walks.}

We now present a set of results that characterize and count walks in $D^m_n$, which will play a key role in the proofs in the next section.

\begin{prop} \label{walksmtimes} For all $i, j \in V$, the number of walks in $D^m_n$ of length $2n - 1$ from vertex $i$ to vertex $j$ is exactly $m$ times the number of walks of length $n - 1$ from $i$ to $j$.
\end{prop}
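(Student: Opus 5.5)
The plan is to reduce the statement to Lemma \ref{walks2n-1} together with the uniqueness parts of Lemmas \ref{walksn-1} and \ref{walksn-1a}. Fix $i,j\in V$ and let $w_{n-1}(i,j)$ and $w_{2n-1}(i,j)$ denote the numbers of walks of lengths $n-1$ and $2n-1$ from $i$ to $j$. The first step is to show that $w_{n-1}(i,j)\in\{0,1\}$ for every pair. Every pair $(i,j)$ falls into one of three situations. Either both vertices lie in a common $V^k$, which includes every pair with $i=1$ or $j=1$, since $1\in V^k$ for all $k$; this case is handled by Lemma \ref{walksn-1}. Or $i\in V^k\setminus\{1\}$ and $j\in V^r\setminus\{1\}$ with $k\neq r$, which is handled by Lemma \ref{walksn-1a}. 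In each situation, whenever a walk of length $n-1$ exists it is unique.

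With that in place, I split into two cases. If $w_{n-1}(i,j)=0$, then Lemma \ref{walks2n-1} shows that no walk of length $2n-1$ from $i$ to $j$ exists, so $w_{2n-1}(i,j)=0=m\cdot 0$. If $w_{n-1}(i,j)=1$, the same lemma gives exactly $m$ walks of length $2n-1$, so $w_{2n-1}(i,j)=m=m\cdot 1$. In both cases the identity $w_{2n-1}(i,j)=m\,w_{n-1}(i,j)$ holds. Equivalently, in matrix form this says $A^{2n-1}=mA^{n-1}$, because the $(i,j)$ entry of $A^t$ counts walks of length $t$ from $i$ to $j$. I would record this identity, since it is presumably what the next section uses as an annihilating polynomial $\lambda^{n-1}(\lambda^n-m)$.

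The only point that needs care, and the step I expect to be the main obstacle, is the claim that the count ``exactly $m$'' in Lemma \ref{walks2n-1} is a genuine bijection. Concretely, the walks of length $2n-1$ should correspond bijectively to pairs consisting of the unique walk of length $n-1$ and a choice of cycle index $s\in\{1,\ldots,m\}$, via the decompositions (a)--(c) of the preceding lemma. Two things must be checked. First, distinct $s$ give distinct walks: in (a) and (b) the inserted cycle $C^s$ passes through the vertex $(s-1)(n-1)+2$, and in (c) the cycle is inserted at the unique visit to vertex $1$. Second, a given walk cannot be decomposed in two different ways, because the first visit to vertex $1$ fixes where the cycle is inserted. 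If this is judged already settled by Lemma \ref{walks2n-1}, the proposition follows directly from the case split above.
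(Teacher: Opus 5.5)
Your proposal is correct and matches the paper's proof. Like the paper, you combine the at-most-one-walk conclusion of Lemmas \ref{walksn-1} and \ref{walksn-1a} with the existence-and-count statement of Lemma \ref{walks2n-1}, and you take the ``exactly $m$'' count as settled there. Your extra bijection check is the same route, not a different one: distinct $s$ give distinct walks, and the first visit to vertex $1$ fixes where the cycle is inserted. It just makes explicit the counting step that the paper leaves implicit in its decomposition lemma.
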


\proof Let $i, j \in \{1, \ldots, m(n - 1) + 1\}$. By Lemmas \ref{walksn-1} and \ref{walksn-1a}, either there is no walk of length $n-1$ from $i$ to $j$ in $D^m_n$, or there exists a unique such walk. Furthermore, by Lemma \ref{walks2n-1}, there exists a walk of length $2n - 1$ from $i$ to $j$ if and only if there is a walk of length $n - 1$ between those vertices. In those cases, the number of walks of length $2n - 1$ is exactly $m$.\endproof

This result extends naturally to walks of longer lengths, as shown below.

\begin{prop} \label{walksm^p-1times} Let $p\in\mathbb{N}$ with $p\geq 2$. For all $i, j \in V$, the number of walks in $D^m_n$ of length $pn - 1$ from vertex $i$ to vertex $j$ is $m^{p - 1}$ times the number of walks of length $n-1$ from $i$ to $j$.\end{prop}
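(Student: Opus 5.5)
The plan is to mirror the proof of Proposition \ref{walksmtimes}, with Lemma \ref{walkspn-1} taking the place of Lemma \ref{walks2n-1}. Fix $i,j\in V$ and write $N_t(i,j)$ for the number of walks of length $t$ from $i$ to $j$ in $D^m_n$. The first step is to record, from Lemmas \ref{walksn-1} and \ref{walksn-1a}, that $N_{n-1}(i,j)\in\{0,1\}$. Here we should check that these two lemmas exhaust all pairs: the case $i,j\in V^k$ for a common $k$ (which includes every pair with $i=1$ or $j=1$) is Lemma \ref{walksn-1}, and the case $i\in V^k\setminus\{1\}$, $j\in V^r\setminus\{1\}$ with $k\neq r$ is Lemma \ref{walksn-1a}.

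The second step splits into two cases. If $N_{n-1}(i,j)=0$, then Lemma \ref{walkspn-1} gives that there is no walk of length $pn-1$ from $i$ to $j$. Hence $N_{pn-1}(i,j)=0=m^{p-1}\cdot 0$. If $N_{n-1}(i,j)=1$, the same lemma gives exactly $m^{p-1}$ walks of length $pn-1$, so $N_{pn-1}(i,j)=m^{p-1}=m^{p-1}N_{n-1}(i,j)$. In both cases the claimed identity holds, and entrywise it reads $A^{pn-1}=m^{p-1}A^{n-1}$.

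As an independent check, and possibly a cleaner route, I would also argue by induction on $p$, with Proposition \ref{walksmtimes} as the base case $p=2$. The key fact would be $A^n=mE_{11}+N$, where $E_{11}$ is the matrix unit at $(1,1)$ and $N$ records the pairs joined by walks of length $n$. The claim to establish is $A^{n-1}A^{n}=mA^{n-1}$, i.e.\ $A^{2n-1}=mA^{n-1}$, which is exactly Proposition \ref{walksmtimes} in matrix form. Multiplying by $A^{n}$ repeatedly then gives
\[
A^{pn-1}=A^{(p-1)n-1}A^{n}=m^{p-2}A^{n-1}A^{n}=m^{p-2}A^{2n-1}=m^{p-1}A^{n-1}.
\]
This route avoids relying on the informal decomposition argument used for Lemma \ref{walkspn-1}.

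The main obstacle is the validity of the decomposition underlying Lemma \ref{walkspn-1}, namely that every walk of length $pn-1$ consists of $p-1$ full cycles inserted at vertex $1$ together with the unique walk of length $n-1$. In the proof I will lean on that lemma as stated. If more rigor is wanted, the matrix identity $A^{2n-1}=mA^{n-1}$ from Proposition \ref{walksmtimes} together with the induction above gives a self-contained argument.
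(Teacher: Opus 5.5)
Your main argument is essentially the paper's proof. You use Lemmas~\ref{walksn-1} and~\ref{walksn-1a} to get at most one walk of length $n-1$ from $i$ to $j$, then apply Lemma~\ref{walkspn-1} in the two cases; your check that these two lemmas cover every pair $(i,j)$ is a detail the paper leaves implicit. Your induction alternative is also correct and slightly cleaner, since it gets $A^{pn-1}=m^{p-1}A^{n-1}$ by pure algebra from the case $p=2$, $A^{2n-1}=mA^{n-1}$, though it still rests on the decomposition argument for that base case, and the remark $A^n=mE_{11}+N$ is never used.
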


\proof Let $i,j$ in $\{1,\ldots,m(n-1)+1\}$. As before, by Lemmas \ref{walksn-1} and \ref{walksn-1a}, there is either no walk of length $n-1$ from $i$ to $j$, or exactly one. Lemma \ref{walkspn-1} states that such a walk of length $pn-1$ exists if and only if a walk of length $n-1$ exists between $i$ and $j$, and in those cases, there are exactly $m^{p-1}$ such walks of length $pn-1$.\endproof

\begin{lem} \label{walksn}
For every $i \in V \setminus \{1\}$, there exists exactly one walk in $D^m_n$ of length $n$ from $i$ to itself.\end{lem}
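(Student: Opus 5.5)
Fix $i\in V\setminus\{1\}$. Then $i$ lies in exactly one cycle, say $C^k$, and we can write $i=(k-1)(n-1)+\ell$ with $\ell\in\{2,\ldots,n\}$. The plan is the same as in Lemma \ref{walksn-1}: exhibit one closed walk of length $n$ at $i$, then show that every other closed walk at $i$ is strictly longer.

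\emph{Existence.} Let $Q_{i1}$ be the shortest path in $C^k$ from $i$ to $1$, and $Q_{1i}$ the shortest path in $C^k$ from $1$ to $i$. Their lengths are $n-\ell+1$ and $\ell-1$. Hence the walk $Q_{1i}\circ Q_{i1}$ goes once around $C^k$, starting and ending at $i$, and has length $n$.

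\emph{Uniqueness.} Vertex $1$ is the only cut vertex, and each vertex $i\neq 1$ has out-degree and in-degree $1$. So any nontrivial closed walk at $i$ is forced along $C^k$ until it reaches $1$; that is, it begins with $Q_{i1}$. By the same reasoning, it must end with $Q_{1i}$, since the only way back into $i$ runs along $C^k$ from $1$. Between these two segments sits a closed walk at $1$. As noted in the proof of Lemma \ref{walksn-1}, such a walk is either trivial or of the form $C^{r_s}\circ\cdots\circ C^{r_1}$ with $s\geq 1$, of length $sn$. The full walk is therefore either $Q_{1i}\circ Q_{i1}$, of length $n$, or $Q_{1i}\circ C^{r_s}\circ\cdots\circ C^{r_1}\circ Q_{i1}$, of length $(s+1)n\geq 2n>n$. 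This is exactly the case $i=j\neq 1$ already treated in the proof of Lemma \ref{walksn-1}.

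The one point needing care is the claim that a closed walk at $i\neq1$ must pass through $1$ and decompose as above. This follows because the only out-neighbour of $(k-1)(n-1)+\ell$ is $(k-1)(n-1)+\ell+1$ (or $1$ when $\ell=n$), so the walk cannot return to $i$ before reaching $1$. With that decomposition in hand, the only closed walk at $i$ of length $n$ is $Q_{1i}\circ Q_{i1}$, i.e.\ $C^k$ traversed once from $i$, which proves the lemma.
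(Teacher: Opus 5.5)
Your proof is correct and follows essentially the same route as the paper: you exhibit the single traversal of $C^k$ starting at $i$, and you note that every other closed walk at $i$ has the form $Q_{1i}\circ C^{r_s}\circ\cdots\circ C^{r_1}\circ Q_{i1}$, of length $(s+1)n>n$. Your in-/out-degree-one argument, which forces the walk to begin with $Q_{i1}$ and end with $Q_{1i}$, justifies a decomposition that the paper simply asserts.
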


\proof Since $i\in V\setminus  \{1\}$, there exist $k\in\{1,\ldots,m\}$ and $\ell\in\{2,\ldots, n\}$ such that 
$i=(k-1)(n-1)+\ell$. The shortest walk from $i$ to itself is $\langle i\rangle$ and has length $0$. The walk
\begin{align*}P=&\Big\langle i=(k-1)(n-1)+\ell,(k-1)(n-1)+(\ell+1),\ldots, (k-1)(n-1)+n,1,\\
&(k-1)(n-1)+2,\ldots, (k-1)(n-1)+(\ell-1), (k-1)(n-1)+\ell =i\Big\rangle\end{align*}
has length $n$ and stays within the cycle $C^k$, returning to $i$. Any other walk from $i$ to itself must include one or more additional cycles, and can be written as 
$$Q_{1i}\circ C^{r_s}\circ\ldots\circ C^{r_1}\circ Q_{i1},$$ 
where $Q_{i1}$ and $Q_{1i}$ are the shortest paths in $C^k$ from $i$ to $1$ and from $1$ to $i$, respectively, $s\in\mathbb N$ and $r_1, \ldots, r_s \in \{1, \ldots, m\}$. These walks from $i$ to itself have length $(s+1)n$, strictly greater than $n$.
\endproof

\begin{lem} \label{walksn-2} For every $k\in\{1,\ldots,m\}$, there exists exactly one walk in $D^m_n$ of length $n-2$ from vertex $(k-1)(n-1)+2$ to $(k-1)(n-1)+n$.
\end{lem}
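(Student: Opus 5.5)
We follow the same pattern as in Lemma \ref{walksn}: exhibit one walk explicitly, then rule out every other walk by a length count. Fix $k\in\{1,\ldots,m\}$ and write $i=(k-1)(n-1)+2$ and $j=(k-1)(n-1)+n$. The natural candidate is the path inside $C^k$ that stays away from vertex $1$,
$$P=\Big\langle (k-1)(n-1)+2,(k-1)(n-1)+3,\ldots,(k-1)(n-1)+n\Big\rangle .$$
Every consecutive pair here is an edge of the second type in the definition of $E$ (with $i$ ranging over $\{2,\ldots,n-1\}$). Its length is $n-2$, which gives existence. When $n=3$ this is the single edge from $(k-1)(n-1)+2$ to $(k-1)(n-1)+3$, and the argument is unchanged.

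For uniqueness, we use the fact that every vertex other than $1$ has out-degree exactly one. Starting from $i$, each step is forced as long as the walk has not reached vertex $1$. So any walk of length $n-2$ starting at $i$ must follow $P$ step by step unless it reaches vertex $1$ within its first $n-2$ steps. The only way to reach $1$ from $i$ is through the path $P$ followed by the edge $(j,1)$, and this takes $n-1$ steps. Hence no walk of length at most $n-2$ from $i$ can visit $1$, and the walk of length $n-2$ from $i$ is exactly $P$.

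Alternatively, we could argue as in the proofs of Lemmas \ref{walksn-1} and \ref{walksn}. Any walk from $i$ to $j$ other than $P$ must pass through $1$, so it has the form $Q_{1j}\circ C^{r_s}\circ\cdots\circ C^{r_1}\circ Q_{i1}$ with $s\geq 0$. Such a walk has length at least $(n-1)+(n-1)>n-2$, so it cannot have length $n-2$. I do not expect any step to be an obstacle; the only care needed is to check the indexing of the edges and the degenerate case $n=3$.
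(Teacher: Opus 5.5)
Your proof is correct, and your fallback argument is exactly the paper's proof. In that argument every other walk from $i$ to $j$ passes through $1$, so it has the form $Q_{1j}\circ C^{r_s}\circ\cdots\circ C^{r_1}\circ Q_{i1}$ and length $(s+1)n+(n-2)>n-2$. The paper writes the same walks as $P_{ij}\circ Q_{1i}\circ Q_{i1}$ and $P_{ij}\circ Q_{1i}\circ C^{r_s}\circ\cdots\circ C^{r_1}\circ Q_{i1}$.

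Your main argument takes a genuinely different route. The paper lists all walks from $i$ to $j$ by how they pass through the hub. You instead use that every vertex other than $1$ has out-degree one, so any walk leaving $i$ is forced until it first reaches $1$, which happens only at step $n-1$.

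This is more elementary and more self-contained. The paper takes the decomposition of walks through vertex $1$ as evident, and your forcing argument is essentially the justification of that decomposition. It also proves slightly more: $P$ is the only walk of length $n-2$ starting at $i$ at all. Equivalently, row $i$ of $M^{n-2}$ has a single nonzero entry, a $1$ in column $j$.

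The paper's formulation has its own advantages. It matches the other walk lemmas, and it gives the complete list of lengths of walks from $i$ to $j$, namely $n-2+tn$ for $t\ge 0$, which it reuses directly in Lemma \ref{walks2n-2}. Your forcing argument also extends to that lemma: after reaching $1$ at step $n-1$, the walk must enter $C^k$ in order to end at $j$ after $n-1$ further forced steps. This costs only that small additional step.
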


\proof Let $k\in\{1,\ldots,m\}$, $i=(k-1)(n-1)+2$ and $j=(k-1)(n-1)+n$. The shortest walk from $i$ to $j$ in $D^m_n$ is
$$P_{ij}=\Big\langle i=(k-1)(n-1)+2,(k-1)(n-1)+3,\ldots, (k-1)(n-1)+n =j\Big\rangle,$$
with length $n-2$. Any alternative walk from $i$ to $j$ must include detours through vertex $1$, and can be written as
$$P_{ij}\circ Q_{1i}\circ Q_{i1}$$
or
$$P_{ij}\circ Q_{1i}\circ C^{r_s}\circ\ldots\circ C^{r_1}\circ Q_{i1},$$ where $Q_{i1}$ and $Q_{1i}$ are the shortest paths in $C^k$ from $i$ to $1$ and from $1$ to $i$, respectively, $s\in\mathbb N$ and $r_1,\ldots, r_s\in\{1,\ldots,m\}$. These walks from $i$ to $j$ have lengths $n+|P_{ij}|$ and $(s+1)n+|P_{ij}|$, respectively, both strictly greater than $n-2$. \endproof

\begin{lem} \label{walks2n-2} For every $k\in\{1,\ldots,m\}$, there exists exactly one walk in $D^m_n$ of length $2n-2$ from vertex $(k-1)(n-1)+2$ to $(k-1)(n-1)+n$.
\end{lem}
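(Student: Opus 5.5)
The approach mirrors the proof of Lemma \ref{walksn-2}: I will decompose every walk from $i=(k-1)(n-1)+2$ to $j=(k-1)(n-1)+n$ according to how many times it passes through vertex $1$.

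First I describe all walks from $i$ to $j$. Inside $C^k$, the only edges leaving $i,\ldots,j$ go forward along the cycle, and the only way to leave $V^k\setminus\{1\}$ is through vertex $1$. So any walk from $i$ to $j$ either never visits $1$, in which case it is exactly $P_{ij}$ of length $n-2$, or it visits $1$ at least once. In the second case I split the walk at its first and last visits to $1$. Before the first visit it must be $Q_{i1}$, of length $n-1$. After the last visit it must be $Q_{1j}$, of length $n-1$, since it stays inside $C^k$. In between it is a closed walk at $1$, so it equals $C^{r_s}\circ\cdots\circ C^{r_1}$ for some $s\geq 0$, of length $sn$. The walk therefore has the form $Q_{1j}\circ C^{r_s}\circ\cdots\circ C^{r_1}\circ Q_{i1}$, with length $(s+2)n-2$.

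Next I match lengths. Setting $(s+2)n-2=2n-2$ forces $s=0$, so the walk is $Q_{1j}\circ Q_{i1}$. This walk is unique because $Q_{i1}$ and $Q_{1j}$ are each uniquely determined. It coincides with $P_{ij}\circ Q_{1i}\circ Q_{i1}$ from the proof of Lemma \ref{walksn-2}, which also has length $n+(n-2)=2n-2$. The walk $P_{ij}$ does not qualify since $n-2\neq 2n-2$, and the walks with $s\geq1$ have length at least $3n-2>2n-2$. This proves existence and uniqueness.

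The main obstacle is making the decomposition rigorous. I must justify that a walk confined to $V^k\setminus\{1\}$ from $i$ to $j$ can only be $P_{ij}$, and that any segment between visits to $1$ is a concatenation of full cycles $C^r$. Both facts follow because every vertex other than $1$ has out-degree one. The argument should state this out-degree property explicitly, which the earlier lemmas used only implicitly.
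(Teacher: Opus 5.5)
Your proof is correct and takes essentially the same route as the paper. Every walk from $i$ to $j$ is either $P_{ij}$ or has the form $Q_{1j}\circ C^{r_s}\circ\cdots\circ C^{r_1}\circ Q_{i1}$ of length $(s+2)n-2$, so length $2n-2$ forces $s=0$; your $Q_{1j}\circ Q_{i1}$ is exactly the paper's $P_{ij}\circ Q_{1i}\circ Q_{i1}$. Your splitting at the first and last visits to vertex $1$, justified by the out-degree-one property of the non-hub vertices, makes explicit the classification of walks that the paper's proof (and that of Lemma~\ref{walksn-2}) only asserts.
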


\proof Let $k\in\{1,\ldots,m\}$, $i=(k-1)(n-1)+2$ and $j=(k-1)(n-1)+n$. As shown in Lemma \ref{walksn-2}, the shortest walk from $i$ to $j$ is
$$P_{ij}=\Big\langle i=(k-1)(n-1)+2,(k-1)(n-1)+3,\ldots, (k-1)(n-1)+n =j\Big\rangle,$$
with length $n-2$. If we append the path $Q_{1i} \circ Q_{i1}$, the shortest round trip from $i$ to $1$ and back, the total walk
$$
P_{ij} \circ Q_{1i} \circ Q_{i1}
$$
has length $n+|P_{ij}|=2n-2$. Any longer walk from $i$ to $j$ that includes one or more additional cycles must exceed this length. \endproof

\section{Adjacency matrices of oriented Dutch windmill graphs}  

\hspace{0.5cm} Let $M$ denote the adjacency matrix of the oriented Dutch windmill graph $D^m_n$. It is straightforward to verify that
 $\mathrm{rank}(M)=m(n-2)+2$. Moreover, $M$ is invertible if and only if $m=1$, in which case $M^{-1}=M^T$.

In the remainder of this section, we focus on the case $m > 1$. We will show that $M$ is not group invertible, determine that its Drazin index is $n - 1$, and derive an explicit expression for its Drazin inverse.

Recall that the $(i, j)$-entry of $M^k$ gives the number of walks of length $k$ from vertex $i$ to vertex $j$ in $D^m_n$.

The following theorem plays a central role in determining the Drazin index of $M$ and in deriving a closed-form expression for its Drazin inverse.

\begin{thm} \label{adj_pwr} Let $M$ be the adjacency matrix of the oriented Dutch windmill graph $D^m_n$. Then the following statements hold:
\begin{enumerate}
\item $M^{2n-1}=mM^{n-1}$;
\item $M^{n-1}\neq 0$;
\item $M^{2n-2}\neq mM^{n-2}$;
\item $M^n\neq mI$, where $I$ is the identity matrix of order $m(n-1)+1$;
\item $M^{n^2-1}=m^{n-1}M^{n-1}$.
\end{enumerate}
\end{thm}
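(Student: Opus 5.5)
The plan is to read each statement entrywise, using that the $(i,j)$-entry of $M^k$ counts walks of length $k$ from $i$ to $j$ in $D^m_n$, and to reduce everything to the walk-counting results of Section 2.

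For (1), Proposition \ref{walksmtimes} says that for every pair $i,j\in V$ the number of walks of length $2n-1$ is $m$ times the number of walks of length $n-1$. Hence $(M^{2n-1})_{ij}=m(M^{n-1})_{ij}$ for all $i,j$.

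For (5), I apply Proposition \ref{walksm^p-1times} with $p=n\geq 3$. It gives $(M^{n^2-1})_{ij}=(M^{n\cdot n-1})_{ij}=m^{n-1}(M^{n-1})_{ij}$ entrywise. Alternatively, iterating (1) gives $M^{pn-1}=M^{(p-1)n}M^{n-1}=m^{p-1}M^{n-1}$ by induction on $p$, since $M^{pn-1}=M^{n}M^{(p-1)n-1}$.

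For (2), Lemma \ref{walksn-1} exhibits a walk of length $n-1$ from $1$ to $n$ (the case $k=1$). Therefore $(M^{n-1})_{1,n}=1\neq 0$.

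The nonequalities (3) and (4) need a specific witness entry, and choosing it is the only real point of the proof.

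For (3), take $i=2$ and $j=n$, so $k=1$. Lemma \ref{walks2n-2} gives $(M^{2n-2})_{2,n}=1$, while Lemma \ref{walksn-2} gives $(M^{n-2})_{2,n}=1$, so $m(M^{n-2})_{2,n}=m>1=(M^{2n-2})_{2,n}$. Here the standing assumption $m>1$ is used.

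For (4), take a diagonal entry $i\neq 1$, say $i=2$. Lemma \ref{walksn} gives $(M^n)_{22}=1\neq m=(mI)_{22}$, again because $m>1$. One could equally use the entry $(1,1)$, where $(M^n)_{11}=m$ because the walks are exactly the cycles $C^1,\dots,C^m$; that entry matches $mI$, which is why a vertex other than $1$ must be chosen.

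I expect no serious obstacle beyond citing the correct lemmas. The mild subtlety is that (3) and (4) genuinely require $m>1$, and that the earlier lemmas' counts (notably that there is exactly one walk in Lemma \ref{walks2n-2}) are taken as given.
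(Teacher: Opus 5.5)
Your proposal is correct and follows the paper's proof essentially step for step. You use the same entrywise walk-counting citations for (1), (2) and (5), the same witness entry $(2,n)$ via Lemmas~\ref{walksn-2} and~\ref{walks2n-2} for (3), and a diagonal entry other than $(1,1)$ via Lemma~\ref{walksn} for (4), with $m>1$ used exactly where the paper uses it.

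Your optional derivation of (5) by iterating (1), $M^{pn-1}=M^{n}\,m^{p-2}M^{n-1}=m^{p-2}M^{2n-1}=m^{p-1}M^{n-1}$, is a nice bonus, since it needs no separate walk analysis for length $pn-1$. One small fix: reword the remark in (4), because the $(1,1)$ entry cannot ``equally'' be used --- it agrees with $mI$.
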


\proof 
(1.) By Proposition \ref{walksmtimes}, for any pair of vertices $i, j \in V$, the number of walks of length $2n - 1$ from $i$ to $j$ in $D^m_n$ is exactly $m$ times the number of walks of length $n - 1$ from $i$ to $j$. Hence, we have
$$
(M^{2n-1})_{ij} = m (M^{n-1})_{ij}
\quad \text{for all } i,j,
$$
which gives $M^{2n - 1} = m M^{n - 1}$.

(2.) Lemma \ref{walksn-1} ensures that there exist walks of length $n - 1$ in $D^m_n$, so $M^{n - 1} \neq 0$.

(3.) Suppose, for contradiction, that $M^{2n - 2} = m M^{n - 2}$. Consider the entry in position $(2,n)$. By Lemma \ref{walksn-2}, there is exactly one walk of length $n - 2$ from vertex 2 to vertex $n$, so $(M^{n - 2})_{2,n} = 1$, and hence $(m M^{n - 2})_{2,n} = m$. However, Lemma \ref{walks2n-2} states that there is exactly one walk of length $2n - 2$ from 2 to $n$, so $(M^{2n - 2})_{2,n}=1$, contradicting the assumption that the entries match. Therefore, $M^{2n - 2} \neq m M^{n - 2}$.

(4.) From Lemma \ref{walksn}, we know that all diagonal entries of $M^n$, except for the entry in position $(1,1)$, are equal to $1$. Since $m > 1$, it follows that $M^n \neq mI$, where $I$ is the identity matrix of the same order as $M$.

(5.) Finally, by Proposition \ref{walksm^p-1times}, the number of walks of length $n^2-1$ from any vertex $i$ to any vertex $j$ is $m^{n - 1}$ times the number of walks of length $n-1$ between those vertices. Therefore,
$$
(M^{n^2-1})_{ij} = m^{n-1} (M^{n-1})_{ij} \quad \text{for all } i, j,
$$
which implies $M^{n^2-1}=m^{n-1} M^{n-1}$.\endproof

\textit{This result shows that the walk structure of the digraph imposes strong algebraic constraints on its adjacency matrix. The relation $M^{2n-1} = m M^{n-1}$, for instance, reflects the fact that all walks of length $2n-1$ arise from extending walks of length $n-1$ by a full cycle, with $m$ choices corresponding to the $m$ available cycles.}

\subsection{Drazin index}

\hspace{0.5cm} From Theorem \ref{adj_pwr}(1.), we know that the polynomial
$$
\varphi(\lambda) = \lambda^{2n-1} - m\lambda^{n-1} = \lambda^{n-1}(\lambda^n - m)
$$
annihilates the adjacency matrix $M$. Furthermore, Theorem \ref{adj_pwr}(2.)-(4.) shows that the polynomials
$$
\varphi_1(\lambda) = \lambda^{n-1}, \quad 
\varphi_2(\lambda) = \lambda^{2n-2} - m\lambda^{n-2} = \lambda^{n-2}(\lambda^n - m), \quad
\varphi_3(\lambda) = \lambda^n - m
$$
are not annihilating polynomials of $M$. Therefore, $\varphi(\lambda)$ is a minimal-degree annihilating polynomial of the form $\lambda^{n-1} g(\lambda)$, with $\gcd(\lambda, g(\lambda)) = 1$ and $\psi_A(\lambda)=\lambda^{n-1} h(\lambda)$ for some $h(\lambda$) with $\gcd(\lambda, h(\lambda)) = 1$. From these facts, we derive the following result.

\begin{thm} \label{ind} The Drazin index of the adjacency matrix of the oriented Dutch windmill graph $D^m_n$ is $n - 1$.
\end{thm}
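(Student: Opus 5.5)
The plan is to use the fact recalled in the introduction: the Drazin index equals the multiplicity of $0$ as a root of the minimal polynomial $\psi_M$. So it suffices to show that $\lambda^{n-1}$ divides $\psi_M$ while $\lambda^{n}$ does not.

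\emph{Upper bound.} By Theorem \ref{adj_pwr}(1.), $\varphi(\lambda)=\lambda^{n-1}(\lambda^n-m)$ annihilates $M$. Hence $\psi_M \mid \varphi$. Since $m>0$, $\lambda$ does not divide $\lambda^n-m$. Therefore the multiplicity of $0$ in $\psi_M$ is at most $n-1$. In operational terms, I would note that $\ker M^{n-1}=\ker M^{n}$. Writing $\mathbb{R}^N=\ker M^{n-1}\oplus\ker(M^n-mI)$ (coprime factors), $M$ acts invertibly on the second summand. This already gives $\operatorname{ind}(M)\le n-1$, and shows that $M^D$ exists as a polynomial in $M$.

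\emph{Lower bound.} Write $\psi_M=\lambda^{t}h(\lambda)$ with $h(0)\neq 0$ and $t\le n-1$, where $h\mid \lambda^n-m$. Suppose for contradiction that $t\le n-2$. Then $\lambda^{n-2}(\lambda^n-m)=\varphi_2$ is a multiple of $\psi_M$, so $M^{2n-2}=mM^{n-2}$. This contradicts Theorem \ref{adj_pwr}(3.). Hence $t=n-1$.

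The first case is $t=0$ (that is, $M$ invertible). It is covered by this argument when $n\ge 2$. Alternatively, it is excluded directly by the rank formula $\operatorname{rank}(M)=m(n-2)+2<m(n-1)+1$ for $m>1$. Statements (2.) and (4.) of Theorem \ref{adj_pwr} are not strictly needed but serve as consistency checks. Statement (4.) shows that the nilpotent part is nontrivial, and statement (2.) confirms that $M$ is not nilpotent of small index.

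The only delicate step is the divisibility bookkeeping in the lower bound. I must make sure that $\psi_M\mid\varphi$ forces the non-zero part $h$ to divide $\lambda^n-m$. This holds by coprimality with $\lambda$. It then makes $\lambda^{t}h$ divide $\lambda^{n-2}(\lambda^n-m)$ whenever $t\le n-2$. Once this is checked, the conclusion $\operatorname{ind}(M)=i(M)=n-1$ follows immediately.
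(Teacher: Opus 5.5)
Your proof is correct and follows the paper's route. The annihilating polynomial $\lambda^{n-1}(\lambda^n-m)$ from Theorem \ref{adj_pwr}(1.) bounds the multiplicity of $0$ in $\psi_M$ above by $n-1$, and Theorem \ref{adj_pwr}(3.) rules out anything smaller. Your divisibility bookkeeping ($h \mid \lambda^n-m$, hence $\psi_M \mid \lambda^{n-2}(\lambda^n-m)$ whenever $t\le n-2$) is more explicit than the paper's and correctly shows that only part (3.) is needed; like the paper, it tacitly requires $m>1$.
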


\textit{The fact that the Drazin index equals $n-1$ reflects the minimal walk length required to fully capture the cyclic structure of the digraph through its adjacency powers. This highlights how the depth of the digraph's feedback structure determines the algebraic complexity of its pseudo-inverse.}

It follows from the result above that the adjacency matrices of oriented Dutch windmill graphs do not admit a group inverse, as their Drazin index satisfies $n-1\geq 2$.

\subsection{Drazin inverse}

\hspace{0.5cm} The Drazin inverse of a singular square matrix $A$ can be obtained via the so-called core-nilpotent decomposition, as follows. Given a singular square matrix $A$, there exist invertible matrices $U$ and $C$, and a nilpotent matrix $N$ such that $N^k = 0 \neq N^{k-1}$, for some positive integer $k$, and
$$
A = U \begin{bmatrix} C & 0 \\ 0 & N \end{bmatrix} U^{-1}.
$$
In this case, the Drazin inverse of $A$ is given by
$$
A^D = U \begin{bmatrix} C^{-1} & 0 \\ 0 & 0 \end{bmatrix} U^{-1},
$$
with $\mathrm{ind}(A) = k$.

Now, let $A$ be a square matrix and suppose that $k = \mathrm{ind}(A)$, and that there exists a matrix $X$ such that $A^k = A^{k+1}X$. Then, by applying the core-nilpotent decomposition, it follows that the Drazin inverse of $A$ can be expressed as
$$
A^D = A^k X^{k+1}.
$$
This expression applies in particular when an annihilating polynomial of $A$ has the form $\chi_A(\lambda) = \lambda^k g(\lambda)$, with $\gcd(\lambda, g(\lambda)) = 1$ and $g(\lambda) = g_0 + g_1 \lambda + \cdots + g_\ell \lambda^\ell$, where $\ell \geq 1$. From
$$
0 = A^k g(A) = A^k (g_0 I + g_1 A + \cdots + g_\ell A^\ell),
$$
we obtain
$$
g_0 A^k = A^{k+1}(-g_1 I - g_2 A - \cdots - g_\ell A^{\ell - 1}),
$$
which gives
$$
A^k = A^{k+1} X, \quad \text{with} \quad X = -\frac{1}{g_0}(g_1 I + g_2 A + \cdots + g_\ell A^{\ell - 1}).
$$

Applying this to our case, from Theorem \ref{adj_pwr}(1.) we know that
$$
\varphi(\lambda) = \lambda^{n - 1}(\lambda^n - m)
$$
is an annihilating polynomial of $M$, with $g(\lambda) = \lambda^n - m$, satisfying $\gcd(\lambda, g(\lambda)) = 1$. Thus, since $\mathrm{ind}(M) = n-1$ by Theorem \ref{ind}, we set
$$
X = -\frac{1}{-m} M^{n - 1} = \frac{1}{m} M^{n - 1},
$$
so that
$$
M^{n - 1} = M^n X.
$$
Then the Drazin inverse of $M$ is
$$
M^D = M^{n - 1} X^n = \frac{1}{m^n} M^{n - 1} M^{(n - 1)n} = \frac{1}{m^n} M^{n^2 - 1}.
$$
From Theorem \ref{adj_pwr}(5), it follows that
$$
M^D = \frac{1}{m^n} m^{n-1} M^{n-1} =\frac{1}{m} M^{n-1}.
$$
Hence, the Drazin inverse of $M$ is simply a scaled version of $M^{n - 1}$, where the scalar factor is $\frac{1}{m}$. To determine its nonzero entries, we use Lemmas \ref{walksn-1} and \ref{walksn-1a}, which describe the structure of walks of length $n-1$ in $D^m_n$. These results imply that the only nonzero entries of $M^D$, all equal to $\frac{1}{m}$, occur precisely at the positions $(i, j)$ satisfying one of the following:
\begin{enumerate}
\item[(i.)]  $i=1$ and $j=(k-1)(n-1)+n$, for some $k \in \{1,\ldots,m\}$;
\item[(ii.)] $i=(k-1)(n-1)+2$ and $j=1$, for some $k \in \{1, \ldots, m\}$;
\item[(iii.)] $i=(k-1)(n-1)+\ell$ and $j=i-1$, for some $k\in\{1,\ldots,m\}$ and $\ell\in\{3,\ldots, n\}$;
\item[(iv.)] $i=(k-1)(n-1)+\ell$ and $j=(r-1)(n-1)+(\ell-1)$, for distinct $k,r\in\{1,\ldots,m\}$ and $\ell\in\{3,\ldots, n\}$. 
\end{enumerate}
These positions correspond exactly to the vertex pairs connected by walks of length $n - 1$ in the oriented Dutch windmill graph $D^m_n$, as detailed in the aforementioned lemmas. 

From the preceding results, we obtain the following theorem.

\begin{thm} Let $M$ be the adjacency matrix of the oriented Dutch windmill graph $D^m_n$, with $m \geq 2$ and $n \geq 3$. Then the Drazin inverse of $M$ is given by
$$
M^D = \frac{1}{m} M^{n-1}.
$$
Moreover, the only nonzero entries of $M^D$ are equal to $\frac{1}{m}$, and occur precisely at the positions $(i, j)$ satisfying one of the following:
\begin{enumerate}
\item  $i=1$ and $j=(k-1)(n-1)+n$, for some $k \in \{1,\ldots,m\}$;
\item $i=(k-1)(n-1)+2$ and $j=1$, for some $k \in \{1, \ldots, m\}$;
\item $i=(k-1)(n-1)+\ell$ and $j=i-1$, for some $k\in\{1,\ldots,m\}$ and $\ell\in\{3,\ldots, n\}$;
\item $i=(k-1)(n-1)+\ell$ and $j=(r-1)(n-1)+(\ell-1)$, for distinct $k,r\in\{1,\ldots,m\}$ and $\ell\in\{3,\ldots, n\}$. 
\end{enumerate}
These positions correspond exactly to the pairs of vertices connected by walks of length $n - 1$ in $D^m_n$.
\end{thm}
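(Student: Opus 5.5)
The plan is to verify directly that $X=\frac{1}{m}M^{n-1}$ satisfies the three defining equations of the Drazin inverse with $k=n-1$. Uniqueness of the Drazin inverse then gives $M^D=\frac1m M^{n-1}$ without needing the core-nilpotent decomposition. The only input needed is Theorem \ref{adj_pwr}(1.), namely $M^{2n-1}=mM^{n-1}$, together with Theorem \ref{ind} for the index. The support description will come afterwards from the walk lemmas.

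\textbf{Step 1 (algebraic identities).}
\begin{itemize}
\item Commutation: $MX=XM$ holds trivially, since $X$ is a polynomial in $M$.
\item First equation: $M^{n}X=\frac1m M^{2n-1}=M^{n-1}$ by Theorem \ref{adj_pwr}(1.). This is exactly $M^{k+1}X=M^k$ with $k=n-1$.
\item Second equation: $XMX=\frac1{m^2}M^{2n-1}=\frac1{m^2}\,mM^{n-1}=X$. This is the equation $XAX=X$; the paper's introduction writes ``$XAX=A$'', which I read as a typo.
\end{itemize}
Hence $X=M^D$. Theorem \ref{ind} confirms that $n-1$ is the least admissible exponent. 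This agrees with the computation preceding the theorem via $M^D=M^{n-1}X^{n}=\frac1{m^n}M^{n^2-1}$ and Theorem \ref{adj_pwr}(5.), which gives a second check.

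\textbf{Step 2 (entries).} Since $(M^{n-1})_{ij}$ counts walks of length $n-1$ from $i$ to $j$, the entries of $M^D$ are $\frac1m$ times these counts. By Lemmas \ref{walksn-1} and \ref{walksn-1a}, each count is $0$ or $1$, so every nonzero entry equals $\frac1m$. The positions come from splitting vertex pairs by cycle membership.
\begin{itemize}
\item Both $i,j$ in a common $V^k$ (this includes every pair involving vertex $1$, since $1\in V^k$ for all $k$): Lemma \ref{walksn-1} gives items (1), (2) and (3).
\item $i\in V^k\setminus\{1\}$ and $j\in V^r\setminus\{1\}$ with $k\neq r$: Lemma \ref{walksn-1a} gives item (4).
\end{itemize}
These two cases exhaust all pairs, because any vertex other than $1$ lies in exactly one $V^k$.

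\textbf{Main obstacle.} The algebra is immediate. The real weight lies in the walk-counting lemmas, in particular the claim that every walk of length $n-1$ is unique and falls into one of the listed forms. The point to check is the exhaustiveness in Step 2. A pair with $i=1$ and $j\neq 1$ must be handled inside the $V^k$ containing $j$, and the trivial walk $i=j$ has length $0\neq n-1$ (using $n\ge3$). Both are covered by Lemma \ref{walksn-1}, so no pair escapes the classification.
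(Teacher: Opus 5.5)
Your proof is correct, but it reaches $M^D=\frac1m M^{n-1}$ by a different route from the paper's.

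\textbf{The paper's route.} It does not check the defining equations. It uses the core-nilpotent decomposition to justify the general formula $A^D=A^kX^{k+1}$ whenever $A^k=A^{k+1}X$. It reads off $X=\frac1m M^{n-1}$ from the annihilating polynomial $\lambda^{n-1}(\lambda^n-m)$ and obtains $M^D=\frac{1}{m^n}M^{n^2-1}$. It then needs the separate walk count of Theorem~\ref{adj_pwr}(5.) to collapse this to $\frac1m M^{n-1}$.

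\textbf{Your route.} You guess the answer and check $M^nX=M^{n-1}$, $XMX=X$ and $MX=XM$ directly. This uses only Theorem~\ref{adj_pwr}(1.) and the uniqueness of the Drazin inverse, and uniqueness holds for any exponent for which the equations are satisfied. So neither item (5.) nor the index theorem is needed for the formula. In fact (5.) follows from (1.) by induction, since $M^{n-1+jn}=m^jM^{n-1}$.

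\textbf{What each buys.} Your argument is shorter and self-contained. The paper's argument derives the answer rather than verifying it. The same mechanism produces $A^D$ as a polynomial in $A$ from any annihilating polynomial $\lambda^k g(\lambda)$ with $g(0)\neq 0$, without knowing the answer in advance.

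Your reading of ``$XAX=A$'' in the introduction as a typo for $XAX=X$ is right. Your treatment of the support matches the paper's appeal to Lemmas~\ref{walksn-1} and~\ref{walksn-1a}, and you make explicit the exhaustiveness of the case split that the paper leaves implicit.
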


This result shows that the Drazin inverse reflects precise walk relationships in the digraph. Each nonzero entry corresponds to a unique walk of length $n-1$ between vertices, highlighting the close interplay between combinatorial walk structure and algebraic pseudo-inverses.

\section*{Conclusion}

\hspace{0.5cm} We have determined the Drazin index and provided explicit expressions for the Drazin inverse of the adjacency matrices of oriented Dutch windmill graphs. The results reveal how the combinatorial structure of these digraphs governs their algebraic invertibility, and how the walk enumeration approach leads naturally to polynomial identities involving matrix powers.

The constructive framework developed here extends existing results for paths, cycles, and bipartite graphs, while illustrating the algebraic richness of recursively structured digraphs. It highlights the direct correspondence between the number and arrangement of directed cycles and the algebraic relations satisfied by their adjacency matrices.

Beyond its theoretical implications, this methodology can be applied to discrete dynamical models and symbolic computation of generalized inverses in structured networks. Future work may explore similar approaches for broader classes of coalescent digraphs or weighted versions of Dutch windmill structures, where the combination of algebraic and combinatorial perspectives could yield further insights into the spectral analysis of complex discrete systems.

\section*{Acknowledgements}
{This research was partially financed by Portuguese Funds
through FCT (Funda\c c\~ao para a Ci\^encia e a Tecnologia) within the Project UID/00013/2025.}
\section*{Data Availability}
No datasets were generated or analyzed during the current study. All results are theoretical and fully contained within the manuscript.
\section*{Conflict of interest}
The authors declare that they have no conflict of interest.

\bibliographystyle{plain}
\bibliography{BibliographyODW}

\bigskip
\end{document}